\newcommand{\shapipe}{\,
                    \setlength{\unitlength}{1ex}
                    \begin{picture}(2,2)
                    \put(.25,.15){\line(0,1){1.22}}
                    \put(.25,1.37){\line(1,0){0.61}}
                    \put(.45,-.05){\line(1,0){0.61}}
                    \put(.4,0){\line(1,0){0.61}}
                    \put(.35,.05){\line(1,0){0.61}}
                    \put(.3,.1){\line(1,0){0.61}}
                    \put(.25,.15){\line(1,0){0.61}}
                    \put(1.04,-.05){\line(0,1){1.22}}
                    \put(0.99,0){\line(0,1){1.22}}
                    \put(0.94,.05){\line(0,1){1.22}}
                    \put(0.89,.1){\line(0,1){1.22}}
                    \put(0.84,.15){\line(0,1){1.22}}
                    \end{picture}
                    \!}
\newcommand{\qee} {\hspace*{2mm}\hfill $\shapipe$}
\newtheorem{theorem}{Theorem}[section]
\newtheorem{ques}[theorem]{Open Question}
\newenvironment{question}{\begin{ques} \rm}{\qee\end{ques}}
\newtheorem{lem}[theorem]{Lemma}
\newtheorem{cor}[theorem]{Corollary}
\newtheorem{factief}[theorem]{Fact}
\newtheorem{rem}[theorem]{Remark}
\newenvironment{remark}{\begin{rem} \rm}{\qee\end{rem}}
  \newcommand{\bident}[0]{\bigskip\noindent}
\newcommand{\medent}{\medskip\noindent}
\newcommand{\verz}[1]{\{ #1 \}}
\renewcommand{\iff}{\leftrightarrow}
\newcommand{\Iff}{\Leftrightarrow}
\newcommand{\To}{\Rightarrow}
\newcommand{\opr}{\mbox{\small $\Box$}}
\newcommand{\oco}{\Diamond}
 \newcommand{\gn}[1]{{\ulcorner #1 \urcorner}}
 \newcommand\textvtt[1]{{\normalfont\fontfamily{cmvtt}\selectfont #1}}
   \newcommand{\seli}[1]{\textvtt{\textup{[}\,#1\,\textup{]}}}
  \newcommand{\gtwo}{Second Incompleteness Theorem}
\title[From Tarski to G\"odel]{From Tarski to G\"odel \\
\hspace{1cm}\\
{\footnotesize Or, how to derive the Second Incompleteness Theorem from the
Undefinability of Truth without Self-reference}}
\author{Albert Visser}
 \address{Philosophy, Faculty of Humanities,
                Utrecht University,
               Janskerkhof 13,
                3512BL~~Utrecht, The Netherlands}
\email{a.visser@uu.nl}
\date{\today}
\keywords{Formal Theories, Consistency, Self-Reference, Truth}
\subjclass[2000]{03F25, 03F30, 03F40}
\thanks{I am grateful to Fedor Pakhomov for his comments on an earlier version of the paper and for his
encouragement,}
\begin{document}

\begin{abstract}
In this paper, we provide a fairly general self-reference-free proof of the Second Incompleteness Theorem
from Tarski's Theorem of the Undefinability of Truth.
\end{abstract}

\maketitle

\section{Prelude}
Self-reference is a great and beautiful thing, but it may be interesting to see what one can do  without it.
In this paper,  we provide a self-reference-free proof of the Second Incompleteness Theorem from Tarski's  
Theorem of the Undefinability of Truth. Thus, we at least reduce the self-referential arguments needed for
the Second Incompleteness Theorem and Tarski's  
Theorem of the Undefinability of Truth taken together.

We take self-reference, in this paper, as broadly as possible. There are all kind of subtle discussions that we wish to side-step:  \emph{are
Yablo-style constructions self-reference? does the Grelling paradox involve true self-reference?}
etcetera. We want to avoid  even the slightest smell of diagonalization, self-reference, the recursion theorem, circularity or majorization. 

\subsection{Motivation}
Why is a proof of G\"odel's Second Incompleteness Theorem from Tarski's 
Theorem interesting? Our result is a step in a program to find self-reference-free proofs
both of the Second Incompleteness Theorem and of Tarski's  Theorem.
I do not think there are, at present, convincing self-reference-free proofs ---at least, not in the very broad sense of self-reference discussed above---
of either the {\gtwo} or of Tarski's Theorem. There is a self-reference-free proof, due to Kotlarski, of Tarski's
Theorem \emph{for the case of Uniform Biconditionals}. See \cite{kotl:inco04}. However, Kotlarski's attempted proof for non-uniform biconditionals
fails. Moreover, Kotlarski's proof uses majorization which could be viewed as a form of diagonalization. 

Why is the quest for a self-reference-free proof of these two great theorems important? Well, first there is the `because the
question is there' argument. It is always annoying when we have in essence just one kind of proof of a great theorem.
Secondly, a different proof may lead to new generalizations and new insights. For example, this research
could help us find a solution of Jan Kraj\'{\i}\v{c}ek's problem to find a proof of the non-interpretability of
the extension ${\sf PC}(A)$ of a consistent finitely axiomatized sequential theory $A$ with predicative comprehension in $A$ itself that
does not run via the \gtwo.

\subsection{About the Proof}
Our proof is roughly the result of combining a proof plan due to Kreisel (see \cite{smor:inco77}) with a proof plan due to 
Adamowic and Bigorajska (see \cite{adam:exis01}).

\medent
I give two variants of the proof. The first avoids the use of the third L\"ob condition to wit that provability implies provability of 
provability. The second does use the third L\"ob condition. 
Since the two variants share a lot of text, I will represent the first variant as the main line and I will give the additions for the second line in \textvtt{typewriter font}.

\begin{remark}
The ideal would be to study a new proof in a setting that is so general that we do not have the resources to even
prove things like the G\"odel Fixed Point Lemma. We do not aspire to this ideal here. The theories studied all have the
resources to do diagonalization.
\end{remark}

\subsection{Prerequisites}
The full proof presupposes 
some knowledge of relevant materials from the text books \cite{haje:meta91} and \cite{kaye:mode91}.
For the benefit of the reader who is not acquainted with weak theories we sketch the proof for the case
of Peano Arithmetic in Section~\ref{pa}. This special case has the advantage that almost all technical
complications disappear, while the essence of the proof-idea is still visible.

We will employ, for the full proof, the notations and conventions and elementary facts of \cite{viss:look17} to which the present paper is a sequel.
We will use the Interpretation Existence Lemma. For a careful exposition of this last result, see \cite{viss:inte17}. Finally, we will make use of
a result from \cite{viss:pean14}.

\section{The Second Incompleteness Theorem for Peano Arithmetic}\label{pa}

We prove {\gtwo} for Peano Arithmetic {\sf PA}. Let the standard axiomatization of {\sf PA} be $\pi$.
We write $\opr_\pi A$ for the arithmetization of the provability of $A$ from $\pi$ and
$\oco_\pi A := \neg\opr_\pi \neg A$ for the consistency of (the theory axiomatized by) $\pi$ extended with $A$.

\begin{theorem}\label{gpa}
${\sf PA} \nvdash \oco_\pi \top$.
\end{theorem}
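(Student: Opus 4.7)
The plan is to assume, towards contradiction, that ${\sf PA} \vdash \oco_\pi \top$, and then to derive a contradiction from Tarski's undefinability of truth by exhibiting, inside {\sf PA}, a definable truth predicate for the entire arithmetical language.

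First, from ${\sf PA} \vdash \oco_\pi \top$, the Interpretation Existence Lemma provides an interpretation $J$ of $\pi$ in $\pi$. Thus {\sf PA} proves that every axiom of $\pi$ holds under the relativization by $J$, and so that for every $\phi$ provably in $\pi$, the relativization $\phi^J$ holds. This gives us a Henkin-style internal model of {\sf PA} definable inside {\sf PA}.

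Next, I would bring in the family of partial truth predicates $\mathrm{Tr}_n$ for $\Sigma_n$-formulas, which {\sf PA} defines and proves correct for each standard $n$. Using the result from \cite{viss:pean14}, which lets us align an internal Henkin interpretation with the external partial truth predicates, I would verify that for every standard $n$ and every $\Sigma_n$ sentence $\phi$, ${\sf PA} \vdash \phi \iff \phi^J$, with the $\mathrm{Tr}_n$ serving as the bridge. The uniformity of this equivalence, across the levels of the arithmetical hierarchy, is the point of the whole exercise: it promotes the family of partial truth predicates $\mathrm{Tr}_n$ to a single predicate $T(\gn{\phi}) := \phi^J$ that satisfies the Tarski biconditional $T(\gn{\phi}) \iff \phi$ for every arithmetical $\phi$, contradicting Tarski's theorem.

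The main obstacle is the alignment step: showing, \emph{without any self-reference}, that the Henkin interpretation $J$ extracted from $\oco_\pi\top$ respects truth at every level $\Sigma_n$ uniformly. Self-referential proofs would make $J$ a fixed point of some provability-style operator and read off the alignment from a diagonal sentence; here we have to build it from the ambient interpretability theory of {\sf PA} and the soundness of the $\mathrm{Tr}_n$. I expect that the first proof variant achieves this with a somewhat intricate compatibility argument that bypasses the third L\"ob condition $\opr A \to \opr \opr A$, while the second variant exploits the third L\"ob condition to transport iterated provability across hierarchies and thus simplifies the alignment between $J$ and the $\mathrm{Tr}_n$.
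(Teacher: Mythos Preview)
Your proposal has a genuine gap: the ``alignment step'' --- showing ${\sf PA} \vdash \phi \iff \phi^J$ for every arithmetical sentence $\phi$ --- is the entire content of the argument, and you do not actually carry it out. You acknowledge it as the main obstacle and then speculate about how the paper might handle it, but the ingredients you list (the partial truth predicates $\mathrm{Tr}_n$ and the result from \cite{viss:pean14}) are not what the paper uses here, and it is not clear how they would yield the claim. The reference to \cite{viss:pean14} is in fact misplaced: that result is invoked only in the general proof in Section~5, not in the {\sf PA} case. More importantly, there is no evident reason why the Henkin interpretation should satisfy ${\sf PA}\vdash \phi \iff \phi^J$ for arbitrary $\phi$; one direction $S\to S^J$ does hold for $\Sigma^0_1$-sentences $S$ via end-extension (or $\Sigma^0_1$-completeness), but the converse direction, and anything beyond $\Sigma^0_1$, does not follow from the tools you cite.

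The paper's proof is structurally different from your sketch. It is model-theoretic rather than syntactic, and it does not try to make the Tarski biconditionals \emph{provable} in {\sf PA}; it only makes them true in one particular model. The key ideas you are missing are these: (i) take a \emph{maximal} set $\mathcal S$ of $\Sigma^0_1$-sentences with ${\sf PA}+\mathcal S$ consistent, and fix a model $\mathcal M \models {\sf PA}+\mathcal S$; (ii) since ${\sf H}(\mathcal M)$ is an end-extension of $\mathcal M$, it satisfies all of $\mathcal S$, and by maximality of $\mathcal S$ it can satisfy no further $\Sigma^0_1$-sentences, so $\mathcal M$ and ${\sf H}(\mathcal M)$ are $\Sigma^0_1$-elementary equivalent; (iii) the Henkin construction at standard stages depends only on $\Sigma^0_1$-decisions, so this $\Sigma^0_1$-agreement forces ${\sf H}(\mathcal M)$ and ${\sf H}{\sf H}(\mathcal M)$ to be fully elementary equivalent; (iv) combined with ${\sf PA}\vdash \mathfrak H(\gn B)\iff B^{\sf H}$, one reads off ${\sf H}(\mathcal M)\models \mathfrak H(\gn B)\iff B$ for all arithmetical $B$, contradicting Theorem~\ref{tpa}. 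No partial truth predicates $\mathrm{Tr}_n$ appear, and the maximal-$\Sigma^0_1$-set device is precisely what replaces your unspecified alignment argument.
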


\noindent
We will use Tarski's Theorem of the Undefinability of Truth for the case of {\sf PA}.

\begin{theorem}\label{tpa}
For any arithmetical formula $A(x)$ \textup{(}with only $x$ free\textup{)} the theory
${\sf PA}  + \verz{A(\gn B) \iff B \mid \text{$B$ is an arithmetical sentence}}$ is inconsistent.
\end{theorem}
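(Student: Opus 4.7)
The plan is to follow the classical Tarski argument via the G\"odel Fixed-Point Lemma, which is freely available in {\sf PA}. Since {\sf PA} suffices for the coding of syntax and for the representability of primitive recursive functions, the Diagonal Lemma yields, for any arithmetical formula $\varphi(x)$ with a single free variable, an arithmetical sentence $\sigma$ with ${\sf PA} \vdash \sigma \iff \varphi(\gn{\sigma})$. This is precisely the self-referential ingredient that the main results of the paper aim to dispense with; for Tarski's theorem itself, the preceding Remark makes clear that we still permit its use.

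First, I would apply the Fixed-Point Lemma to the formula $\neg A(x)$, producing an arithmetical ``liar'' sentence $L$ satisfying
\[
{\sf PA} \vdash L \iff \neg A(\gn{L}).
\]
Second, since $L$ is itself an arithmetical sentence, the instance $A(\gn{L}) \iff L$ is among the biconditionals adjoined to {\sf PA} in the theory under consideration. Chaining this instance with the fixed-point equivalence gives $L \iff \neg L$ in the extended theory, which is inconsistent.

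There is no genuine obstacle here beyond the routine verification that {\sf PA} has the coding resources to carry out the diagonalization, for which I would appeal to \cite{haje:meta91} and \cite{kaye:mode91}. The whole weight of the argument rests on the Fixed-Point Lemma; the novelty of the paper is not in this standard derivation of Tarski's theorem, but in the \emph{subsequent} reduction of the \gtwo\ to Tarski's theorem without any further recourse to self-reference.
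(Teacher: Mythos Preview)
Your argument is the standard liar-sentence derivation and it is correct. Note, however, that the paper does not actually supply a proof of this theorem: Theorem~\ref{tpa} is stated as a known result and then used as the \emph{hypothesis} from which Theorem~\ref{gpa} is derived without further self-reference. The paper itself concedes, in the Motivation subsection, that no convincing self-reference-free proof of Tarski's Theorem is presently known; so your appeal to the Fixed-Point Lemma here is, in the current state of the art, unavoidable rather than an oversight. Your reading of the Remark is slightly off---it concerns the expressive resources of the theories under study, not a license to use diagonalization in the paper's own arguments---but the practical conclusion is the same: the one place self-reference remains is precisely in establishing Theorem~\ref{tpa}, and your proof handles that place in the expected way.
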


\begin{proof}[Proof of Theorem~\ref{gpa} from Theorem~\ref{tpa}]
We assume Theorem~\ref{tpa}. Suppose, in order to arrive at a contradiction, that ${\sf PA} \vdash \oco_\pi \top$.

\medent
Let $\mathcal S$ be a maximal set of $\Sigma^0_1$-sentences such that
${\sf PA} + \mathcal S$ is consistent. Let $\mathcal M$ be a model of ${\sf PA}+\mathcal S$.

We apply the Henkin Construction as described in
e.g. \cite{fefe:arit60} (see also \cite{viss:inte17}), to construct an internal Henkin model ${\sf H}(\mathcal M)$
of {\sf PA} based on $\oco_\pi\top$. We claim that $\mathcal M$ and ${\sf H}(\mathcal M)$ satisfy the same
$\Sigma^0_1$-sentences, to wit the $\Sigma^0_1$-sentences of $\mathcal S$. 

Consider any $\Sigma^0_1$-sentence $S$ and suppose $\mathcal M \models S$. It is easily seen that
${\sf H}(\mathcal M)$ is an end-extension of $\mathcal M$ and, hence, ${\sf H}(\mathcal M) \models S$.
\seli{By inspection of the Henkin construction, we find that ${\sf PA} \vdash \opr_\pi B \to B^{\sf H}$. So, since,
by $\Sigma^0_1$-completeness, we have ${\sf PA}\vdash S \to \opr_\pi S$, we find  ${\sf PA} \vdash S \to S^{\sf H}$.
Thus, it follows that ${\sf H}(\mathcal M) \models S$.} 

We may conclude that ${\sf H}(\mathcal M) \models \mathcal S$.
On the other hand, by the maximality of $\mathcal S$, the models $\mathcal M$ and ${\sf H}(\mathcal M)$ cannot satisfy more
$\Sigma^0_1$-sentences than those in $\mathcal S$.

\medent
Since, ${\sf H}(\mathcal M)$  again will contain  $\oco_\pi\top$ we can repeat the
construction {\sf H} to obtain ${\sf H}{\sf H}(\mathcal M)$.

We claim that ${\sf H}(\mathcal M)$ and ${\sf H}{\sf H}(\mathcal M)$ are elementary equivalent. The reason is as follows.
The Henkin construction is based on yes-no-decisions that depend on the truth or falsity of $\Sigma^0_1$-questions. On the 
standard level it, thus, depends on the truth or falsity of $\Sigma^0_1$-sentences. Since $\mathcal M$ and
${\sf H}(\mathcal M)$ satisfy the same $\Sigma^0_1$-sentences we find that, on standard levels, the same choices are made
in the Henkin construction
and hence that  ${\sf H}(\mathcal M)$ and ${\sf H}{\sf H}(\mathcal M)$ are elementary equivalent. 

The Henkin construction provides an internal truth-predicate $\mathfrak H$ such that we have
${\sf PA} \vdash \mathfrak H(\gn B) \iff B^{\sf H}$. So we find:
\begin{eqnarray*}
{\sf H}(\mathcal M) \models B & \Iff & {\sf H}{\sf H}(\mathcal M)\models B \\
& \Iff & {\sf H}(\mathcal M) \models B^{\sf H} \\
& \Iff & {\sf H}(\mathcal M) \models \mathfrak H(\gn B).
\end{eqnarray*}
Thus, ${\sf H}(\mathcal M)$ is a model of ${\sf PA}+\verz{  \mathfrak H(\gn B) \iff B \mid \text{$B$ is an arithmetical sentence}}$.
But this contradicts Theorem~\ref{tpa} on the undefinability of truth.
\end{proof}

\noindent
We note that our construction effectively yields a $\Delta^0_2$-truth-predicate $\mathfrak H$ such that
${\sf PA}+ \verz{\mathfrak H(\gn{B}) \iff B \mid \text{$B$ is an arithmetical sentence}}$ is consistent, in case 
{\sf PA} proves its own consistency.

We also note that the proof works on the weaker assumption that ${\sf PA}+ \mathcal S \vdash \oco_\pi \top$. So, it
 follows that $\opr_\pi\bot$ is in any maximal set of $\Sigma^0_1$-sentences $\mathcal S$ such that
${\sf PA}+\mathcal S$ is consistent. 

\section{Statement of Two Theorems}

In this section, we state both  {\gtwo} and Tarski's Theorem of the Undefinability of Truth in the general forms
we  consider in the present paper.

\subsection{Our Version of the \gtwo}
We work with ${\sf S}^1_2$ as our basic basic weak arithmetic. See \cite{buss:boun86} or \cite{haje:meta91}.
We use $\opr_\sigma$ for the arithmetization of provability from axiom set $\sigma$ and
$\oco_\sigma$ for $\neg\opr_\sigma \neg$.

\medent
We will prove the following version of the \gtwo. 

\begin{theorem}\label{goedelsmurf}
Suppose $U$ is a recursively enumerable theory. Let $\sigma$ be a $\Sigma^0_1$-formula \seli{$\Sigma_1^{\sf b}$-formula}
 that defines an axiom set of $U$ in the standard model. Suppose that  $N:U \rhd ({\sf S}^1_2+\oco_\sigma \top)$.
Then $U$ is inconsistent.
\end{theorem}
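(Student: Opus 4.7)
The plan is to lift the argument of Theorem~\ref{gpa} from {\sf PA} to the abstract setting, replacing each occurrence of a Henkin model by an interpretation delivered by the Interpretation Existence Lemma. I argue by contradiction, assuming $U$ is consistent, and I fix the given $N : U \rhd ({\sf S}^1_2 + \oco_\sigma \top)$.

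First I would choose a maximal set $\mathcal S$ of $\Sigma^0_1$-sentences (in the arithmetical language interpreted by $N$) such that the $N$-translations $\mathcal S^N$ are jointly consistent with $U$, and fix a model $\mathcal M$ of $U + \mathcal S^N$. Since the $N$-interior of $\mathcal M$ satisfies ${\sf S}^1_2 + \oco_\sigma \top$, applying the Henkin construction inside ${\sf S}^1_2 + \oco_\sigma\top$, as in \cite{fefe:arit60} and \cite{viss:inte17}, yields an interpretation ${\sf H} : ({\sf S}^1_2 + \oco_\sigma \top) \rhd U$ together with an internal truth predicate $\mathfrak H$ for which $({\sf S}^1_2 + \oco_\sigma \top) \vdash \mathfrak H(\gn B) \iff B^{\sf H}$. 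Composing with $N$, this gives an internal model ${\sf H}(\mathcal M)$ of $U$ inside $\mathcal M$.

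Next I would verify that $\mathcal M$ and ${\sf H}(\mathcal M)$ realise exactly the same $\Sigma^0_1$-sentences, namely $\mathcal S$. For the inclusion $\mathcal S \subseteq \mathrm{Th}_{\Sigma^0_1}({\sf H}(\mathcal M))$, in the first variant I appeal to ${\sf H}(\mathcal M)$ being an end-extension of the $N$-interior of $\mathcal M$; in the second variant, I instead use the provable implication ${\sf S}^1_2 \vdash \opr_\sigma B \to B^{\sf H}$, read off the Henkin construction, together with provable $\Sigma^{\sf b}_1$-completeness of ${\sf S}^1_2$---which is exactly why the axiom formula $\sigma$ must be taken $\Sigma^{\sf b}_1$ in that variant. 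The reverse inclusion follows from maximality of $\mathcal S$: any $\Sigma^0_1$-sentence true in ${\sf H}(\mathcal M)$ is consistent with $U + \mathcal S^N$ and hence already lies in $\mathcal S$. Since $\oco_\sigma \top$ persists into ${\sf H}(\mathcal M)$, the construction can be iterated to ${\sf H}{\sf H}(\mathcal M)$; the $\Sigma^0_1$-agreement forces standard-level Henkin decisions to coincide in both rounds, yielding ${\sf H}(\mathcal M) \equiv {\sf H}{\sf H}(\mathcal M)$. The three-line calculation at the end of the proof of Theorem~\ref{gpa} then shows that $\mathfrak H$ is a truth predicate for ${\sf H}(\mathcal M)$, contradicting the general form of Tarski's theorem to be stated in the following section.

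The hard part, I expect, is carrying out the $\Sigma^0_1$-agreement step under the weak base theory ${\sf S}^1_2$. Both end-extension and full provable $\Sigma^0_1$-completeness are delicate in ${\sf S}^1_2$, which is precisely the reason the argument splits into two variants controlled by whether the third L\"ob condition is invoked. A secondary subtlety is verifying that the iterated interpretation ${\sf H}{\sf H}$ is built by the same Henkin recipe as ${\sf H}$, so that comparing standard-level Henkin decisions is meaningful; this---together with the bookkeeping of relativizations through $N$---is where the interpretational machinery of \cite{viss:look17} should carry us.
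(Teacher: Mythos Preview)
Your outline captures the second variant essentially correctly, modulo the complexity class: over ${\sf S}^1_2$ one has provable $\exists\Sigma^{\sf b}_1$-completeness, not full $\Sigma^0_1$-completeness, and when $\sigma\in\Sigma^{\sf b}_1$ the Henkin decisions are $\exists\Sigma^{\sf b}_1$. So $\mathcal S$ should be a maximal consistent set of $\exists\Sigma^{\sf b}_1$-sentences, and then your argument via $\opr_\sigma S \to S^{\sf H}$ goes through as in the paper.

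The first variant, however, has a real gap that you flag but do not close. Over ${\sf S}^1_2$ the Henkin interpretation does \emph{not} yield an end-extension of the ground model; one only obtains a definable cut that embeds as an initial segment of both $N(\mathcal M)$ and $N{\sf H}N(\mathcal M)$. Moreover, when $\sigma$ is merely $\Sigma^0_1$, the formula ${\sf prov}_\sigma$ and hence the Henkin decisions live in $\Sigma^0_{1,1}$, so $\Sigma^0_1$-agreement is not enough to force elementary equivalence of ${\sf H}N(\mathcal M)$ and ${\sf H}N{\sf H}N(\mathcal M)$. The paper's fix is substantial and not just bookkeeping: one passes to the auxiliary theory ${\sf W} = {\sf S}^1_2 + \{\,S\to S^I \mid S\in\Sigma^0_{1,1},\ I \text{ a definable cut}\,\}$, uses that ${\sf S}^1_2 \rhd_{{\sf loc},{\sf cut}} {\sf W}$ together with downward preservation of $\oco_\sigma\top$ to cuts, observes that $U$ is reflexive (since it interprets its own consistency), and then applies the Orey--H\'ajek characterization to upgrade local to global interpretability, obtaining some $K:U\rhd({\sf W}+\oco_\sigma\top)$. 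Only with this $K$ in place can one push $\Sigma^0_{1,1}$-sentences from $K(\mathcal M)$ down to the shared cut (using ${\sf W}$) and then back up into $K{\sf H}K(\mathcal M)$, giving the required $\Sigma^0_{1,1}$-elementary equivalence. Your end-extension shortcut is exactly what fails when one leaves {\sf PA}.
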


\noindent
The basic idea of a version of the {\gtwo} using interpretations is due to Feferman \cite{fefe:arit60}.
Of course, Feferman, around 1960, was thinking of {\sf PA} as base theory rather than ${\sf S}^1_2$.

\medent
The variant where $\sigma$ is $\Sigma^0_1$ is stronger than the one where $\sigma$ is $\Sigma_1^{\sf b}$. However,
there are easy arguments to reduce the {\gtwo} for $\Sigma^0_1$-axiomatizations to the {\gtwo} for $\Sigma_1^{\sf b}$-axiomatizations.
See \cite{viss:look17}.

\subsection{Our Version of Tarski's Theorem}
We employ the following version of Tarski's Theorem.
Let $\Theta$ be a signature. Suppose $N: U \rhd {\sf R}$, where {\sf R} is the very weak arithmetic introduced in \cite{tars:unde53}.
We take ${\sf TB}^{N,A}_\Theta$ to be the set of all Tarski biconditionals $A(\gn B) \iff B$, for $B$ a $\Theta$-sentence
and $\gn{B}$ an $N$-numeral.

\begin{theorem}\label{tarskismurf}
Let $U$ be a theory of signature $\Theta$. Suppose $N: U \rhd {\sf R}$ and, for some $U$-formula $A$, we have
$U \vdash {\sf TB}^{N,A}_\Theta$. Then, $U$ is inconsistent.
\end{theorem}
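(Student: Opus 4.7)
The plan is to reduce this to the classical one-line fixed-point proof of Tarski's undefinability theorem, carried out internally in $U$ through the interpretation $N$. Concretely, I will build a $\Theta$-sentence $L$ that is $U$-provably equivalent to $\neg A(\gn{L})$, and then play it off against the corresponding Tarski biconditional. In this paper self-reference is being avoided only in the passage \emph{from} Tarski's theorem \emph{to} the Second Incompleteness Theorem, so a diagonal proof of Theorem~\ref{tarskismurf} itself is on the table.

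First, the preparation. Because $N: U \rhd {\sf R}$ and ${\sf R}$ represents every recursive function (this is the original point of \cite{tars:unde53}), the arithmetization of syntax transports through the interpretation: in particular there is an $N$-formula $\mathrm{Sub}(x,y,z)$ of $U$ that represents the recursive substitution function sending (the code of) a formula $F(v)$ and a number $y$ to (the code of) $F(\overline{y})$, where numerals are taken in the sense of $N$. The coding and numeral conventions will be those of \cite{viss:look17}, so that they agree with the ones tacitly used in forming ${\sf TB}^{N,A}_\Theta$.

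Second, the diagonal construction. Set $D(v) := \exists z\,\bigl(\mathrm{Sub}(v,v,z) \wedge \neg A(z)\bigr)$ and let $L$ be the $\Theta$-sentence $D(\gn{D})$. Representability of substitution together with the fact that $N$ interprets ${\sf R}$ yield $U \vdash L \iff \neg A(\gn{L})$. Since $L$ is a $\Theta$-sentence, the biconditional $A(\gn{L}) \iff L$ lies in ${\sf TB}^{N,A}_\Theta$, and so $U \vdash A(\gn{L}) \iff L$. Chaining the two biconditionals gives $U \vdash A(\gn{L}) \iff \neg A(\gn{L})$, and $U$ is therefore inconsistent.

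The main obstacle is the usual interpretation-theoretic hygiene rather than any real conceptual hurdle. One has to verify that the diagonal argument remains valid under the relativization to $N$ over an arbitrary signature $\Theta$: the G\"odel-coding and substitution apparatus live on the $N$-side inside $U$, while the fixed point $L$ must nevertheless be a bona fide $\Theta$-sentence in order to qualify for membership in ${\sf TB}^{N,A}_\Theta$. One must also make sure that the numerals used in $\gn{L}$ during the fixed-point step are exactly the $N$-numerals used in defining ${\sf TB}^{N,A}_\Theta$; the conventions inherited from \cite{viss:look17} are designed precisely so that this match is automatic.
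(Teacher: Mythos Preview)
Your proposal is correct: the direct diagonal construction through the interpretation $N$ works exactly as you describe, and your remarks about the hygiene issues (that $L$ must be a genuine $\Theta$-sentence, and that the numerals must be $N$-numerals matching those in the definition of ${\sf TB}^{N,A}_\Theta$) identify precisely the points that need attention.

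However, the paper does not actually prove Theorem~\ref{tarskismurf}. It is stated as the version of Tarski's Theorem to be \emph{employed}, i.e.\ taken as input to the main argument. The only proof-like material the paper offers in connection with it is a reduction of Theorem~\ref{tarskismurf} to the special case Theorem~\ref{tarskikleinsmurf}: one builds, via the recursion theorem, a translation $\nu^\star$ that agrees with the translation $\nu$ of $N$ on the arithmetical vocabulary and sends the fresh truth predicate ${\sf T}$ to $A({\sf tr}_{\nu^\star}(v))$, thereby interpreting ${\sf R}+{\sf TB}^{{\sf ID},{\sf T}}_{\mathcal A}$ in $U$. So where you diagonalize once inside $U$ to manufacture a single Liar sentence, the paper instead uses the recursion theorem to construct an entire interpretation and then appeals to the inconsistency of the canonical special case. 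Your route is more direct and self-contained; the paper's route has the conceptual payoff (which the paper itself flags) of funnelling every instance of the general theorem through a single fixed application of self-reference, namely whatever proof one has of Theorem~\ref{tarskikleinsmurf}.
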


\noindent
We can reduce our version of Tarski's Theorem to the following special case. This reduction uses the recursion theorem,
so since we want to exclude anything that even smells of self-reference we, probably,  have to exclude this reduction from our main line of argument.
This is a pity since it would have been nice to reduce all instances of the {\gtwo} to one single application of self-reference.

\begin{theorem}\label{tarskikleinsmurf}
Let $\mathcal A$ be the signature of arithmetic extended with a fresh unary predicate {\sf T}. 
Then, ${\sf R}+ {\sf TB}^{{\sf ID},{\sf T}}_{\mathcal A}$ is inconsistent.
\end{theorem}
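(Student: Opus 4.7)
The plan is the classical Liar derivation, carried out formally inside ${\sf R}$. I would apply a G\"odel--Carnap fixed point to the formula $\neg\,{\sf T}(x)$ to manufacture an $\mathcal A$-sentence $L$ satisfying ${\sf R}\vdash L\iff \neg\,{\sf T}(\gn L)$, and then combine this with the Tarski biconditional instance for $L$ to obtain an outright contradiction. As the author flags just above the statement, this is precisely the point where the recursion theorem, and thus self-reference, is unavoidable in the argument.

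Concretely, first I would invoke the Tarski--Mostowski--Robinson strong representability theorem: every recursive function is strongly representable in ${\sf R}$. In particular, the syntactic substitution function $s(n,m)=\gn{\varphi_n(\overline m)}$, defined purely combinatorially on G\"odel numbers of formulas and thus insensitive to the extra predicate ${\sf T}$, is represented by some $\mathcal A$-formula ${\sf Sub}(x,y,z)$ with
\[
{\sf R}\vdash {\sf Sub}(\overline n,\overline m,z)\iff z=\overline{s(n,m)}.
\]
Next I would diagonalize: put $\delta(x):=\forall z\,({\sf Sub}(x,x,z)\to \neg\,{\sf T}(z))$, let $d=\gn{\delta}$, and set $L:=\delta(\overline d)$. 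Since $s(d,d)=\gn L$, strong representability yields ${\sf R}\vdash L\iff \neg\,{\sf T}(\gn L)$.

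Because $L$ is an $\mathcal A$-sentence and ${\sf ID}$ is the identity interpretation, the instance ${\sf T}(\gn L)\iff L$ belongs to ${\sf TB}^{{\sf ID},{\sf T}}_{\mathcal A}$. Chaining it with the fixed-point biconditional gives
\[
{\sf R}+{\sf TB}^{{\sf ID},{\sf T}}_{\mathcal A}\vdash {\sf T}(\gn L)\iff L\iff \neg\,{\sf T}(\gn L),
\]
which is contradictory. The only real obstacle is the first step---verifying that syntactic substitution really is strongly representable in such a weak base theory---but this is the content of the classical representability theorem, and the fresh predicate ${\sf T}$ plays no role in that verification, since strong representability is purely a fact about numerals and terms of the arithmetical fragment.
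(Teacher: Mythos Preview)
Your argument is the standard Liar proof of Tarski's theorem and it is correct: strong representability of the substitution function in ${\sf R}$ is exactly the Tarski--Mostowski--Robinson result, the diagonal construction goes through, and combining the fixed-point equivalence with the Tarski biconditional for $L$ yields an outright contradiction.

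However, there is nothing to compare it against: the paper does not prove Theorem~\ref{tarskikleinsmurf}. This theorem is stated as (a minimal formulation of) the classical Tarski Undefinability Theorem and is taken as an \emph{input} to the paper's programme of deriving the Second Incompleteness Theorem from Tarski's Theorem. What the author supplies immediately after the statement is not a proof of Theorem~\ref{tarskikleinsmurf} but a \emph{reduction} of the more general Theorem~\ref{tarskismurf} to it, and it is that reduction---defining $\nu^\star$ simultaneously with its own arithmetized translation function---that requires the recursion theorem. So your remark that ``the author flags just above the statement'' the need for self-reference slightly misreads the text: the flagged self-reference lives in the reduction step, not in a proof of the theorem itself. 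Of course your own proof also uses self-reference, via the G\"odel--Carnap diagonal, which only confirms the paper's broader point that no self-reference-free route to Tarski's Theorem is presently known.
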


\noindent
Here is our argument for the reduction of Theorem~\ref{tarskismurf} to Theorem~\ref{tarskikleinsmurf}.
Suppose $N:U \rhd {\sf R}$ and $U \vdash {\sf TB}^{N,A}_\Theta$. Let $\nu$ be the translation associated with $N$.
We interpret ${\sf R}+ {\sf TB}^{{\sf ID},{\sf T}}_{\mathcal A}$ in $U$ via the translation $\nu^\star$ which is $\nu$
 on the
arithmetical vocabulary and which is $A({\sf tr}_{\nu^\star}(v))$ on {\sf T}. Here, 
${\sf tr}_{\nu^\star}$ is the arithmetization of 
the function $B \mapsto B^{\nu^\star}$. We evidently need the recursion theorem to make our definition work.

\section{The Henkin Construction}
The main tool of our proof will be the Interpretation Existence Lemma. In this section we state this lemma and
collect the relevant facts around it.

\begin{theorem}
Suppose $\sigma$ is a $\Sigma^0_1$-formula that represents the axioms of $U$ in the standard model.
We have ${\sf H}[\sigma] : ({\sf S}^1_2 +\oco_\sigma \top)  \rhd U$.
  Here ${\sf H}[\sigma]$ is the Henkin interpretation that results from substituting $\sigma$ in certain  formulas  with unary second-order
  variable $X$ that together make up the Henkin interpretation ${\sf H}[X]$.
  
  We have, inside  ${\sf S}^1_2+\oco_\sigma\top$, a truth predicate $\mathfrak H[\sigma]$ for ${\sf H}[\sigma]$ that satisfies the commutation
  conditions for ${\sf H}[\sigma]$ on a definable cut ${\sf J}[\sigma]$.
  It follows that ${\sf S}^1_2 +\oco_\sigma \top \vdash \mathfrak H[\sigma](\gn A) \iff A^{{\sf H}[\sigma]}$.
\end{theorem}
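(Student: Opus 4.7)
The plan is to formalise a parametric Henkin construction inside ${\sf S}^1_2$, with the axiom set left as an open unary second-order parameter $X$, and only afterwards substitute $\sigma$ for $X$. First I would enumerate, in the language of $U$ augmented with countably many Henkin constants $c_0, c_1, \ldots$, the sentences $\varphi_0, \varphi_1, \ldots$, reserving fresh constants to witness existential sentences. Stagewise, I would build an increasing sequence of finite theories $T_0 \subseteq T_1 \subseteq \cdots$ by the usual Lindenbaum-with-witnesses recipe: at step $n$ add $\varphi_n$ if $T_n \cup X \cup \{\varphi_n\}$ has not yet been seen to be inconsistent, and otherwise add $\neg\varphi_n$, together with a suitable witness instance when $\varphi_n$ is existential. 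The whole description is uniform in $X$ and of the right complexity for ${\sf S}^1_2$ to handle. Substituting $\sigma$ for $X$ yields the first-order interpretation ${\sf H}[\sigma]$; its domain is the closed Henkin terms modulo the equivalence induced by $T_\infty$, and its atomic interpretation simply reads atomic formulas off $T_\infty$.

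Next, inside ${\sf S}^1_2 + \oco_\sigma\top$, I would verify by (formalised) induction on the stage $n$ that $T_n$ together with the $\sigma$-axioms is consistent. The propagation step is standard: if adding $\varphi_n$ would yield inconsistency, then $\neg\varphi_n$ is added instead, and this preserves consistency; the witness step relies on the familiar fact that replacing a term by a fresh constant in a consistent theory yields a consistent theory. The base case is precisely $\oco_\sigma\top$. From internal consistency of the limit, together with the fact that every $\sigma$-axiom is eventually placed into $T_\infty$ by construction, one concludes that ${\sf H}[\sigma]$ is an interpretation of $U$ in ${\sf S}^1_2 + \oco_\sigma\top$.

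Finally, I would define the internal truth predicate $\mathfrak H[\sigma](x)$ as membership of $x$ in $T_\infty$ and establish the commutation conditions, namely that $\mathfrak H[\sigma]$ commutes with negation, the binary connectives, and (using the Henkin witnesses) the quantifiers. The main obstacle, and essentially the only delicate one, is that these commutation clauses are uniform statements in the G\"odel number of the formula and involve induction on formula complexity that ${\sf S}^1_2$ does not in general afford. The remedy is the cut ${\sf J}[\sigma]$: define it to consist of those codes on which all commutation clauses have been checked, then use the standard shortening trick to show it is closed under successor, addition and multiplication, hence a definable cut; the commutation properties then hold on ${\sf J}[\sigma]$ by construction. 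Since every externally fixed formula $A$ has a standard G\"odel number and hence lies in ${\sf J}[\sigma]$, the concluding schema ${\sf S}^1_2 + \oco_\sigma\top \vdash \mathfrak H[\sigma](\gn A) \iff A^{{\sf H}[\sigma]}$ then follows externally, instance by instance, by induction on the complexity of $A$. I expect the cut construction, and the verification that the Henkin bookkeeping matches the available complexity in ${\sf S}^1_2$, to be where the real work lies; the remainder is routine modulo the formalism developed in the cited papers.
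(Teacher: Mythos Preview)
Your sketch is a faithful outline of the standard formalised Henkin construction and is essentially what the cited reference carries out; the paper itself does not give a proof of this theorem but simply defers to \cite{viss:inte17}. So there is nothing to compare against in the paper proper, and your plan matches the known argument.

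One small remark on the sketch itself: when you say ``add $\varphi_n$ if $T_n \cup X \cup \{\varphi_n\}$ has not yet been seen to be inconsistent'', make sure you mean something like ``no proof of $\bot$ with code $\le n$'', so that the decision at each stage is of bounded complexity and the whole sequence $(T_n)_n$ is definable with the right complexity for ${\sf S}^1_2$. With that reading, the induction you describe and the cut argument for the commutation clauses are exactly the expected ingredients.
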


\noindent
For a proof of this result see \cite{viss:inte17} which also discusses the history of the result.

\medent
Let $\Sigma^0_{1,1}$ be the class of formulas of the form $\exists x\, \forall y\, \leq tx\, \exists z\, S_0(x,y,z,\vec u)$,
where $S_0$ is $\Delta_0$. Inspection shows that ${\sf prov}_\sigma(x)$, where $\sigma$ is $\Sigma^0_1$,
 can be written as a $\Sigma^0_{1,1}$-formula.
\seli{Inspection shows that ${\sf prov}_\sigma(x)$, where $\sigma$ is $\Sigma^{\sf b}_1$,
 can be written as a $\exists\Sigma^{\sf b}_{1}$-formula. This employs the $\Sigma_1^{\sf b}$-collection (or: $\Sigma_1^{\sf b}$-replacement) Principle. 
 See \cite[Theorem 14, p53]{buss:boun86}.} We have the following insight.

\begin{theorem}\label{elementarysmurf}
Suppose $\sigma$ is a $\Sigma^0_1$-formula \seli{$\Sigma^{\sf b}_1$-formula} that represents the axioms of $U$ in the standard model.
Suppose $\mathcal N_0$ and $\mathcal N_1$ are models of
${\sf S}^1_2 +\oco_\sigma \top$. Suppose further that 
$\mathcal N_0$ and $\mathcal N_1$ are $\Sigma_{1,1}^0$-elementary  equivalent \seli{$\exists \Sigma_1^{\sf b}$-elementary equivalent}.
Then ${\sf H}[\sigma](\mathcal N_0)$ and ${\sf H}[\sigma](\mathcal N_1)$ are elementary 
equivalent.
\end{theorem}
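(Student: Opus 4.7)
The plan is to reduce the elementary equivalence of ${\sf H}[\sigma](\mathcal N_0)$ and ${\sf H}[\sigma](\mathcal N_1)$ to the stated $\Sigma^0_{1,1}$-elementary equivalence of $\mathcal N_0$ and $\mathcal N_1$ via the internal truth predicate $\mathfrak H[\sigma]$. For each standard sentence $A$ in the signature of $U$, I want to show
$$\mathcal N_0 \models A^{{\sf H}[\sigma]} \iff \mathcal N_1 \models A^{{\sf H}[\sigma]},$$
and for this it suffices, using the equivalence ${\sf S}^1_2+\oco_\sigma\top \vdash \mathfrak H[\sigma](\gn A) \iff A^{{\sf H}[\sigma]}$, to compare the two models on the single sentence $\mathfrak H[\sigma](\gn A)$. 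The commutation conditions hold provably only on the cut ${\sf J}[\sigma]$, but for standard $A$ the G\"odel number $\gn A$ is a standard numeral and hence lies below every definable cut, so the biconditional is available externally as well.

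The second step is the complexity analysis. I would inspect the Henkin construction from \cite{viss:inte17} and verify that the formula $\mathfrak H[\sigma](x)$ can itself be written as a $\Sigma^0_{1,1}$-formula (in the bracketed variant, as an $\exists\Sigma_1^{\sf b}$-formula). The relevant observation is that $\mathfrak H[\sigma]$ is built uniformly from ${\sf prov}_\sigma$, which the paragraph preceding the theorem records as $\Sigma^0_{1,1}$ (respectively $\exists\Sigma_1^{\sf b}$), and that the completion-with-Henkin-witnesses layer added on top of ${\sf prov}_\sigma$ does not push the resulting formula out of the class.

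Granted the complexity analysis, the conclusion is immediate. For standard $A$, the sentence $\mathfrak H[\sigma](\gn A)$ is a $\Sigma^0_{1,1}$-sentence with a standard numeral substituted in, so $\Sigma^0_{1,1}$-elementary equivalence of $\mathcal N_0$ and $\mathcal N_1$ gives $\mathcal N_0 \models \mathfrak H[\sigma](\gn A) \iff \mathcal N_1 \models \mathfrak H[\sigma](\gn A)$. Combining with the truth-predicate equivalence and letting $A$ range over the sentences of $U$ yields the elementary equivalence of ${\sf H}[\sigma](\mathcal N_0)$ and ${\sf H}[\sigma](\mathcal N_1)$.

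The main obstacle is the quantifier-complexity bookkeeping for $\mathfrak H[\sigma]$: one must check that wrapping ${\sf prov}_\sigma$ into a Henkin completion does not escape $\Sigma^0_{1,1}$, and for the bracketed variant that the $\Sigma_1^{\sf b}$-collection principle from \cite[Theorem 14]{buss:boun86} really does suffice to keep the predicate inside $\exists\Sigma_1^{\sf b}$. A secondary point worth making explicit is that, although the commutation conditions for $\mathfrak H[\sigma]$ are only proved on the cut ${\sf J}[\sigma]$, every application in this argument is at a standard numeral, which is unconditionally in the cut.
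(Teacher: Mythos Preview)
Your reduction step---passing from elementary equivalence of the Henkin models to agreement of $\mathcal N_0$ and $\mathcal N_1$ on the sentences $\mathfrak H[\sigma](\gn A)$ via the provable biconditional---is fine, and it is essentially the same reformulation the paper has in mind. The gap is in the complexity analysis.

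The claim that the single formula $\mathfrak H[\sigma](x)$ lies in $\Sigma^0_{1,1}$ (respectively $\exists\Sigma_1^{\sf b}$) is not correct, and the ``does not push out of the class'' remark is exactly where the argument fails. The Henkin completion is a sequence of consistency questions: at stage $i$ one asks whether $\oco_\sigma(\chi_{i-1}\wedge\psi_i)$ holds, and this is the \emph{negation} of a $\Sigma^0_{1,1}$-sentence, i.e.\ a $\Pi^0_{1,1}$-sentence. To say that a given bit string records the correct decisions up to stage $n$ therefore requires a conjunction of $\Sigma^0_{1,1}$- and $\Pi^0_{1,1}$-conditions, and the resulting truth predicate does not stay inside $\Sigma^0_{1,1}$. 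The paper itself signals this in the {\sf PA} case: it notes that the construction yields a $\Delta^0_2$-predicate $\mathfrak H$, not a $\Sigma^0_1$ one.

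What the paper's proof actually uses is weaker and does go through: each \emph{individual} decision at a standard stage is determined by the truth value of a single standard $\Sigma^0_{1,1}$-sentence (equivalently its negation). By an external induction on the stage, if $\mathcal N_0$ and $\mathcal N_1$ agree on all standard $\Sigma^0_{1,1}$-sentences---hence also on their negations---they make the same choice at every standard stage, so the standard parts of the two Henkin theories coincide. Phrased in your language: for each fixed standard $A$, the sentence $\mathfrak H[\sigma](\gn A)$ is equivalent to a \emph{Boolean combination} of finitely many standard $\Sigma^0_{1,1}$-sentences (the decisions up to the stage of $A$), and $\Sigma^0_{1,1}$-elementary equivalence automatically transfers such Boolean combinations. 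That is the repair; once you replace the uniform complexity bound on $\mathfrak H[\sigma](x)$ by this pointwise observation, your argument and the paper's coincide.
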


\begin{proof}
Inspecting the Henkin construction, we see that it fully depends on $\Sigma_{1,1}^0$-decisions \seli{$\exists \Sigma_1^{\sf b}$-decisions}.
For standard formulas only the standard $\Sigma_{1,1}^0$-sentences  \seli{$\exists \Sigma_1^{\sf b}$-sentences} true or false in our models are relevant.
\end{proof}

\noindent
Consider any theory $U$ of signature $\Theta$. Suppose the axiom set of $U$ is represented by $\sigma$ in $\Sigma_1^{0}$ \seli{in $\exists\Sigma_1^{\sf b}$}.
Let $N:U \rhd ({\sf S}^1_2+\oco_\sigma\top)$. We write {\sf H} for ${\sf H}[\sigma]$ and $\mathfrak H$ for
$\mathfrak H[\sigma]$.

\begin{theorem}\label{grotesmurf}
Consider $\mathcal M \models U$ and suppose $N(\mathcal M)$ and $N{\sf H}N(\mathcal M)$ are
$\Sigma^0_{1,1}$-elementary equivalent \seli{$\exists\Sigma^{\sf b}_{1}$-elementary equivalent}. Then ${\sf H}N(\mathcal M) \models {\sf TB}_\Theta^{N,\mathfrak H^N}$.
\end{theorem}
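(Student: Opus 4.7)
My plan is to exploit the hypothesized elementary equivalence together with the commutation condition for the truth predicate $\mathfrak H$. I set $\mathcal K := {\sf H}N(\mathcal M)$, a model of $U$, so that $N(\mathcal K) = N{\sf H}N(\mathcal M)$ is exactly the second arithmetical component mentioned in the hypothesis. The hypothesis then says precisely that $N(\mathcal M)$ and $N(\mathcal K)$ are two models of ${\sf S}^1_2+\oco_\sigma\top$ that are $\Sigma^0_{1,1}$-elementarily equivalent \seli{$\exists \Sigma^{\sf b}_1$-elementarily equivalent}. This is the exact input needed to invoke Theorem~\ref{elementarysmurf}.

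Applying Theorem~\ref{elementarysmurf}, I conclude that the two Henkin models ${\sf H}N(\mathcal M) = \mathcal K$ and ${\sf H}N(\mathcal K)$ are elementarily equivalent as models of $U$. Hence, for every $\Theta$-sentence $B$,
\[
  \mathcal K \models B \;\Iff\; {\sf H}N(\mathcal K) \models B \;\Iff\; \mathcal K \models B^{{\sf H}N}.
\]
This is the first of the two ingredients. For the second, I translate the Henkin commutation schema ${\sf S}^1_2+\oco_\sigma\top \vdash \mathfrak H(\gn B) \iff B^{\sf H}$ through the interpretation $N$, obtaining $U \vdash \mathfrak H^N(\gn B) \iff B^{{\sf H}N}$, where $\gn B$ is now the $N$-numeral of $B$ regarded as a closed $U$-term. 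Since $\mathcal K \models U$, this gives $\mathcal K \models \mathfrak H^N(\gn B) \Iff \mathcal K \models B^{{\sf H}N}$. Chaining the two bi-implications yields $\mathcal K \models \mathfrak H^N(\gn B) \Iff \mathcal K \models B$, which is exactly ${\sf H}N(\mathcal M) \models \mathfrak H^N(\gn B) \iff B$ for every $\Theta$-sentence $B$, as required.

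The one step that deserves a moment of care, and that I expect to be the only real obstacle, is purely notational bookkeeping: I must check that the $\gn B$ appearing in the definition of ${\sf TB}^{N,\mathfrak H^N}_\Theta$ is precisely the $N$-numeral (a closed $U$-term), and that applying $N$ to the commutation schema indeed produces the formula $\mathfrak H^N(\gn B) \iff B^{{\sf H}N}$ with the composition ${\sf H}N$ on the right-hand side, for each standard $\Theta$-sentence $B$. Once the translation diagram for numerals and compositions is aligned, the argument is a two-step chase with no further content.
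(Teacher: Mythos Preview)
Your proof is correct and follows exactly the same route as the paper's: invoke Theorem~\ref{elementarysmurf} with $\mathcal N_0 = N(\mathcal M)$ and $\mathcal N_1 = N{\sf H}N(\mathcal M)$ to obtain elementary equivalence of ${\sf H}N(\mathcal M)$ and ${\sf H}N{\sf H}N(\mathcal M)$, then chain this with the (translated) commutation schema $\mathfrak H^N(\gn B) \iff B^{{\sf H}N}$. Your abbreviation $\mathcal K := {\sf H}N(\mathcal M)$ and the extra paragraph on numeral bookkeeping are merely expository additions; the argument is identical.
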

\begin{proof}
From the assumptions of the theorem, we find, by Theorem~\ref{elementarysmurf}, 
that ${\sf H}N(\mathcal M)$ and ${\sf H}N{\sf H}N(\mathcal M)$ are elementary equivalent.
Hence:
\begin{align*}
{\sf H}N(\mathcal M) \models A & \Iff \; {\sf H}N{\sf H}N(\mathcal M) \models A \\
& \Iff\;  {\sf H}N(\mathcal M) \models A^{{\sf H}N} \\
& \Iff  \; {\sf H}N(\mathcal M) \models \mathfrak H^N(\gn A). \qedhere
\end{align*}
\end{proof}

\noindent
\seli{Here is one final fact about the Henkin construction that we will use for the second line in the main proof.

\begin{theorem}
Suppose $\sigma$ is a $\Sigma^0_1$-formula \seli{$\Sigma^{\sf b}_1$-formula} that represents the axioms of $U$ in the standard model.
Then we have ${\sf S}^1_2+\oco_\sigma\top \vdash \opr_\sigma A \to \mathfrak H[\sigma](A)$. 
\end{theorem}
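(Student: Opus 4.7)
The plan is to give a formalized soundness argument for the Henkin interpretation. The key observation is that $\mathfrak H[\sigma]$ contains every $\sigma$-axiom and is closed under first-order logical inference, and the result then falls out by induction on proof length inside ${\sf S}^1_2 + \oco_\sigma\top$.

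First, by inspection of the Henkin construction as presented in \cite{viss:inte17}, I would verify that ${\sf S}^1_2 + \oco_\sigma \top \vdash \forall x \, (\sigma(x) \to \mathfrak H[\sigma](x))$. This is essentially built into the construction, since the starting point of the Henkin procedure is precisely the set of sentences identified by $\sigma$ (together with the fresh witness axioms). Next, again by inspection, I would verify that $\mathfrak H[\sigma]$ contains all first-order logical axioms and is provably closed under modus ponens and generalization. This is a direct consequence of the fact that the construction produces a complete and consistent Henkin set: the membership predicate of a complete consistent theory with witnesses is automatically deductively closed.

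Given these two closure facts, the target statement $\opr_\sigma A \to \mathfrak H[\sigma](A)$ follows by induction on the length of a $\sigma$-proof. Fix a (possibly non-standard) proof $p$ witnessing $\opr_\sigma A$, and show by induction on $n \le |p|$ that the $n$-th line of $p$ is in $\mathfrak H[\sigma]$. The base case handles both $\sigma$-axioms and logical axioms via the two facts above; the inductive step handles the inference rules by closure of $\mathfrak H[\sigma]$ under them.

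The main obstacle is carrying out this induction inside the weak theory ${\sf S}^1_2$: the complexity of $\mathfrak H[\sigma]$ need not lie in a class for which ${\sf S}^1_2$ admits length induction outright, and for non-standard lines of $p$ one cannot appeal to the commutation conditions for $\mathfrak H[\sigma]$, which only hold on the cut ${\sf J}[\sigma]$. I would handle this by working inside ${\sf J}[\sigma]$ provided by the Interpretation Existence Lemma and by packaging the induction as a statement about proof-codes whose predicate has the appropriate bounded complexity once restricted to that cut, exploiting that proof-codes are closed under the relevant polynomial operations. This is a standard move for formalized Henkin-style soundness results; once it is set up properly, the real work sits in Steps 1 and 2 above and is just careful bookkeeping on the construction.
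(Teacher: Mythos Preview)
Your approach is substantially more roundabout than what the paper intends. The paper's proof is the single line ``by inspection of the Henkin construction,'' and the relevant inspection is not an induction along a $\sigma$-proof but a one-step observation about how sentences enter the Henkin set. The predicate $\mathfrak H[\sigma](A)$ says precisely that, at the stage where $A$ is reached in the Henkin enumeration, $A$ is $\sigma$-consistent with the finite theory $C$ assembled up to that point, and the construction maintains the invariant $\oco_\sigma C$. From $\opr_\sigma A$ together with $\oco_\sigma C$ one obtains $\oco_\sigma (C\wedge A)$ immediately, hence $\mathfrak H[\sigma](A)$. For a fixed standard $A$ the relevant stage is standard, so the invariant there is available by unrolling finitely many steps of the construction; no induction on the length of a $\sigma$-proof is invoked at all.

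Your detour through induction on proof lines creates exactly the obstacle you flag, and the fix you sketch does not clearly remove it. Given only $\opr_\sigma A$, the witnessing proof $p$ may be non-standard, and there is no reason its lines, or its length, lie in ${\sf J}[\sigma]$; restricting attention to that cut does not drag $p$ inside it. To run the induction you would need $\mathfrak H[\sigma]$ to be provably closed under modus ponens at arbitrary (possibly non-standard) formula codes, but the completeness of the Henkin set that underwrites such closure is itself only secured on the cut. So as stated the plan has a real gap. The direct inspection argument sidesteps the whole issue by exploiting the structure of the Henkin enumeration rather than the structure of a $\sigma$-proof.
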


\noindent
The proof is by inspection of the Henkin construction.}

\section{The Proof}
Theorem~\ref{grotesmurf} tells us that, in order to to prove Theorem~\ref{goedelsmurf},  it is sufficient
 to provide a model $\mathcal M$ of $U$ so that $N{\sf H}[\sigma]N(\mathcal M)$ is
$\Sigma_{1,1}^0$-elementary equivalent \seli{$\exists\Sigma_{1}^{\sf b}$-elementary equivalent} with $N(\mathcal M)$.

\medent
For the first line, we use the following fact.

\begin{theorem}
We have: \[ {\sf S}^1_2 \rhd_{{\sf loc},{\sf cut}} {\sf W} := {\sf S}^1_2 + \verz{S \to S^I \mid S\in \Sigma^0_{1,1} \text{ and $I$ is a definable cut}\,}.\]
\end{theorem}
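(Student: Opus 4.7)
The plan is to prove this local cut-interpretability result directly: for every finite subtheory $\Gamma \subseteq {\sf W}$ I would produce a definable cut $J$ of ${\sf S}^1_2$ that serves as the domain of a cut interpretation of $\Gamma$ in ${\sf S}^1_2$. Since $\Gamma$ is finite, it consists of some axioms of ${\sf S}^1_2$ together with finitely many implications $S_1 \to S_1^{I_1}, \ldots, S_k \to S_k^{I_k}$, with each $S_i \in \Sigma^0_{1,1}$ and each $I_i$ a definable cut.

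The first step is to invoke the standard Solovay--Pudl\'ak cut-shortening toolkit for ${\sf S}^1_2$, as documented in \cite{haje:meta91}, to produce a definable cut $J$ contained in the intersection $\bigcap_{i \leq k} I_i$ and closed under sufficiently many polynomial-time operations to ensure that every axiom of ${\sf S}^1_2$ holds under relativization to $J$. The intersection of finitely many definable cuts is again a definable cut, and shortening inside it is routine.

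Next I would check each implication $S_i \to S_i^{I_i}$ under the relativization $(-)^J$. The resulting sentence is $S_i^J \to (S_i^{I_i})^J$, whose conclusion has its quantifiers restricted to $J \cap I_i^J$. Provided the definition of $I_i$ is absolute from ${\sf S}^1_2$ down to $J$ (so $I_i^J$ and $I_i$ agree on $J$), the inclusion $J \subseteq I_i$ delivered by the first step gives $J \cap I_i^J = J$. Hence $(S_i^{I_i})^J$ collapses to $S_i^J$ and the relativized axiom becomes the trivial implication $S_i^J \to S_i^J$, evidently provable in ${\sf S}^1_2$.

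The main obstacle is precisely this absoluteness: for an $I_i$ defined by a genuinely $\Sigma^0_1$ formula, $I_i^J$ may strictly shrink $I_i$ on $J$. My plan for handling this is to exploit the fact that every definable cut in ${\sf S}^1_2$ contains a $\Delta_0$-definable subcut $I_i' \subseteq I_i$. Since $S_i \to S_i^{I_i'}$ logically implies $S_i \to S_i^{I_i}$ (a witness in the smaller cut is still a witness in the larger) and the link $S_i^{I_i'} \to S_i^{I_i}$ relativizes trivially once $J$ sees $I_i' \subseteq I_i$, it suffices to carry out the argument with each $I_i$ replaced by its $\Delta_0$-definable subcut. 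In the $\Delta_0$ case $I_i^J$ and $I_i$ manifestly agree on $J$, and the argument closes without any appeal to self-reference beyond what is intrinsic to cut-shortening itself.
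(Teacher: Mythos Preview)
Your overall strategy is natural, but the argument breaks at exactly the point you flag as the ``main obstacle'', and your proposed repair does not work. The claim that every definable cut in ${\sf S}^1_2$ contains a $\Delta_0$-definable subcut is false. To see this, pass to a model $\mathcal M$ of $I\Delta_0+\Omega_1$ (hence of ${\sf S}^1_2$) in which exponentiation is not total. The formula $I(x):\equiv \exists y\,(|y|>x)$ is verifiably a cut in ${\sf S}^1_2$ and is proper in $\mathcal M$. But $\mathcal M\models I\Delta_0$, so $\mathcal M$ has \emph{no} proper $\Delta_0$-definable cut at all; hence there is no $\Delta_0$-formula $I'$ with ${\sf S}^1_2\vdash I'\subseteq I$ that is a cut inside $I$ in $\mathcal M$. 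Thus the crucial absoluteness step cannot be rescued by passing to $\Delta_0$-subcuts, and the relativized implication $S_i^{J}\to (S_i^{I_i})^{J}$ is left unproved in the essential case where $I_i^{J}$ genuinely shrinks on $J$.

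There is also a structural reason to expect the simple ``put $J$ inside all the $I_i$'' idea to fail: the scheme $S\to S^{I}$ asserts \emph{downward} preservation of $\Sigma^0_{1,1}$-sentences to arbitrary cuts, which is not a consequence of any elementary cut-shortening. The paper does not attempt an elementary argument here; it invokes \cite{viss:pean14}, where one shows that ${\sf W}$ is interpretable in an $\omega_1$-cut of the theory \emph{Peano Basso}, a system built around local induction. That route produces, for each finite fragment, a cut on which the required $\Sigma^0_{1,1}$-reflection holds \emph{uniformly} for all further inner cuts---something your construction never establishes. If you want to salvage your approach you would need, for each $I_i$, not a $\Delta_0$-subcut but a cut $J$ with $J\models \forall x\,I_i(x)$ (equivalently $I_i^{J}=J$); arranging this simultaneously for arbitrary definable $I_i$ is precisely the non-trivial content handled by the Peano Basso machinery.
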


\noindent
Here $ \rhd_{{\sf loc},{\sf cut}}$ is local interpretability on definable cuts.
For a treatment of this insight, see \cite{viss:pean14}.\footnote{The theory {\sf W} is interpretable in an $\omega_1$-cut of the theory
Peano Basso of  \cite{viss:pean14}.}

\bident
Suppose $\sigma$ is a $\Sigma^0_1$-formula that represents the axioms of $U$ in the standard model.
Suppose $N:U \rhd ({\sf S}^1_2+\oco_\sigma \top)$. Since $\Pi^0_{1,1}$-formulas are downwards preserved to cuts,
we have
$({\sf S}^1_2 + \oco_\sigma\top) \rhd_{{\sf loc},{\sf cut}} ({\sf W}+ \oco_\sigma \top)$. It follows that
$U \rhd_{\sf loc}  ({\sf W}+ \oco_\sigma \top)$. Since $U$ proves its own consistency with respect to $N$ it follows that
$U$ is reflexive. Hence, by the Orey-H\'ajek characterization, we find $K: U \rhd  ({\sf W}+ \oco_\sigma \top)$, for some
$K$. We note that the Orey-H\'ajek characterization is itself based on the Interpretation Existence Lemma.

We now take $\mathcal S$ a maximal set of $\Sigma^0_{1,1}$-sentences such that $U+\mathcal S^K$ is consistent.
Let $\mathcal M$ be a model of $U+\mathcal S^K$. Let $\mathcal J$ be a $K$-internally definable cut between
$K(\mathcal M)$ and $K{\sf H}K(\mathcal M)$, where ${\sf H} := {\sf H}[\sigma]$. 
We  have:
 \begin{eqnarray}
 S \in \mathcal S & \To & K(\mathcal M) \models S \\
 & \To & \mathcal J \models S \\ 
 & \To & K{\sf H}K(\mathcal M) \models S 
 \end{eqnarray}

\noindent
Step (2) holds since  $K(\mathcal M) \models W$ and the definition of $\mathcal J$ is $K$-internal. We have Step (3), since
$\Sigma^0_{1,1}$-sentences are upwards preserved from cuts. 

By the maximality of $\mathcal S$, it follows that $S\in \mathcal S$ iff 
$K(\mathcal M)\models S$ and $S\in \mathcal S$ iff 
$K{\sf H}K(\mathcal M) \models S$. 
We may conclude that $K(\mathcal M)$ and $K{\sf H}K(\mathcal M)$ are $\Sigma^0_{1,1}$-elementary
equivalent. Thus, we have our desired result with $K$ in the role of $N$.

\medent
\seli{We treat the second line. Suppose $\sigma$ is a $\Sigma^{\sf b}_1$-formula that represents the axioms of $U$ in the standard model.
Suppose $N:U \rhd ({\sf S}^1_2+\oco_\sigma \top)$. Let $\mathcal S$ be a maximal set of 
$\exists\Sigma^{\sf b}_1$-sentences such that $U+\mathcal S^N$ is consistent.
Let ${\sf H} := {\sf H}[\sigma]$.
We have, by verifiable $\exists\Sigma_1^{\sf b}$-completeness in ${\sf S}^1_2$,  for $S\in \mathcal S$, that:
\begin{eqnarray*}
U+\mathcal S^N & \vdash & \opr_\sigma S^N \\
& \vdash & S^{N{\sf H}}
\end{eqnarray*}
Now let $\mathcal M$ be a model of $U+\mathcal S^N$. It follows that ${\sf H} N (\mathcal M) \models \mathcal S^N$. 
By maximality, it follows that, for any $S\in \exists\Sigma_1^{\sf b}$, we have 
\[ N(\mathcal M) \models S \;\Iff\; s\in \mathcal S \;\;\text{ and } \;\;
N {\sf H}N(\mathcal M) \models S \;\Iff\; s\in \mathcal S.\] So we are done.}

\medent
We note that the second line looks somewhat more efficient. However, 
the first line avoids the more refined syntactic analysis that is the basis
of the second line. 

\begin{question}
Our argument is presented as a model-theoretic argument. So, it is itself not obviously formalizable in a weak theory.
However, it seems to me that the models can be eliminated from the argument. They mainly function as a heuristic tool.
So, the question is how much resources do we need to internalize our argument in a theory. Is ${\sf S}^1_2$ sufficient?

We note that, since L\"ob's Principle follows, in the classical case, from the Second Incompleteness Theorem and since
the Third L\"ob Condition follows from L\"ob's Principle, an internalized version of the first variant of our proof implies the
Third L\"ob Condition. 

\medent
A closely related question is whether our argument can be made constructive. This seems, at first sight, rather hopeless because of the
radically non-constructive character of the Henkin construction. However, one can reduce the question the Second Incompleteness Theorem
for constructive theories to the Second Incompleteness Theorem for classical theories. Now if we can make the argument completely theory-internal
we would be there. 
\end{question}

\end{document}